\numberwithin{equation}{section}
\numberwithin{equation}{subsection}
\theoremstyle{plain}
\newtheorem{theorem}[equation]{Theorem}
\theoremstyle{definition}
\newtheorem{remark}[equation]{Remark}
\numberwithin{equation}{section}
\numberwithin{equation}{subsection}
\newcommand{\labelpar}{\label}
\title{Classification of rational unicuspidal curves with two Newton pairs}
\author{J\'{o}zsef Bodn\'{a}r}
\address{A. R\'enyi Institute of Mathematics, 1053 Budapest,
Re\'altanoda u. 13-15,  Hungary.}
\email{bodnar.jozef@renyi.mta.hu}
\thanks{The author is supported by
 the `Lend\"ulet' and ERC program `LTDBud' at MTA Alfr\'ed R\'enyi Institute of Mathematics.}
\keywords{rational cuspidal curves, Newton pairs, plane curve singularities}
\date{}
\begin{document}

\begin{abstract}
 Based on Tiankai Liu's PhD thesis \cite{Liu}, we give a complete classification of local topological types of singularities with two Newton pairs on rational unicuspidal complex projective plane curves.
\end{abstract}

\maketitle


\pagestyle{myheadings} \markboth{{\normalsize
J. Bodn\'ar}}{ {\normalsize Classification of rational unicuspidal curves with two Newton pairs}}

\section{Intorduction}

\subsection{Introduction}

We give a complete list of possible local topological types of plane curve singularities of rational unicuspidal complex projective curves whose only singularity has exactly two Newton pairs. An analogous list for rational unicuspidal curves with \emph{one} Newton (or Puiseux) pair was presented in \cite{BLMN1p}.

In \cite{BLMN1}, a conjecture was formulated by J. Fern\'andez de Bobadilla, I. Luengo, A. Melle-Hern\'andez and A. N\'{e}methi on the local topological types of rational cuspidal curves. Moreover, the conjecture was checked explicitly for all cuspidal curves whose complement has logarithmic Kodaira dimension $\overline{\kappa} \leq 1$ (\cite[Theorem 1]{BLMN1}) and also for several curves with $\overline{\kappa} = 2$ (\cite[Theorem 2]{BLMN1}), in particular, for all known rational unicuspidal curves; see also the end of \S \ref{ss:basic} and Remark \ref{rem:kappa} for further details.

In \cite{BL1}, M. Borodzik and Ch. Livingston proved the conjecture for rational \emph{unicuspidal} curves in the general case (\cite[Theorem 1.1]{BL1}), thus obtaining a necessary combinatorial condition on the numerical invariants of local plane curve singularities occuring on rational unicuspidal curves.

Using this condition, in 2014 Tiankai Liu in his PhD thesis \cite[Theorem 1.1]{Liu} gave a full list of possible local types with two Newton pairs and asked which types on this list are indeed realizable by rational curves.

Actually, all but two types on his list are realizable, as it can be shown by results and constructions already present in the literature (Kashiwara \cite{Kashiwara}, Miyanishi and Sugie \cite{MiSu}, Fenske \cite{Fen}, Tono \cite{Tono1, Tono2}, Orevkov \cite{Or}).

\subsection{Acknowledgement} The author would like to thank Andr\'as N\'emethi for fruitful discussions and Maciej Borodzik for introducing Liu's work and emphasizing the significance of these observations.

\section{Preliminaries}

\subsection{Notation and definitions}

Let $h \in \mathbb{C}[x,y,z]$ be an irreducible homogeneous polynomial of degree $d$. Its zero set $C = \{ h=0 \} \subset \mathbb{C}P^2$ is called a \emph{complex projective curve}. A point $P \in C$ is called \emph{singular} if the derivative of the defining polynomial vanishes at that point, \textit{i.e.} $\frac{\partial h}{\partial x}|_P = \frac{\partial h}{\partial y}|_P = \frac{\partial h}{\partial z}|_P = 0$. $C$ is called \emph{unicuspidal} if it has only one singular point and at that point the local plane curve singularity is locally irreducible. $C$ is called \emph{rational} if its normalization is homeomorphic to $S^2$ (in the case of rational cuspidal curves, that is, rational curves having locally irreducible singularities only, $C$ itself is already homeomorphic to $S^2$).

To characterize the \emph{local embedded topological type} of a plane curve singularity, several invariants can be used, which are equivalent to each other. We are referring to \cite{BK, EN, Wall} for further information. Also, \cite[\S 2.1]{BLMN1} is a brief introduction to this topic. We recall the main facts in the following paragraphs.

Let $f: (\mathbb{C}^2,\mathbf{0}) \rightarrow (\mathbb{C},0)$ be a germ of a local holomorphic function which is not smooth (\textit{i.e.} its derivative vanishes at $\mathbf{0}$), irreducible in $\mathbb{C}\{x,y\}$. Then it has a \emph{local parametrization}, \textit{i.e.}  there exists $x(t), y(t) \in \mathbb{C}\{t\}$ such that $f(x(t),y(t)) \equiv 0$ and $t \mapsto (x(t), y(t))$ is a bijection for $|t| < \varepsilon$ small enough. Up to local homeomorphism, we can assume that the parametrization has the form
\begin{equation}
x(t) = t^a, \quad y(t) = t^{b_1} + \dots + t^{b_r}
\end{equation}
with $a < b_1 < \dots < b_r$, $a > \textrm{gcd}(a,b_1) > \textrm{gcd}(a,b_1,b_2) > \dots > \textrm{gcd}(a,b_1,b_2,\dots, b_r) = 1$. If $r = 2$, we say that the singularity has \emph{two Newton pairs}. More precisely, we say that a local plane curve singularity $\{ f(x,y) = 0 \}$ has two Newton pairs $(p_1,q_1)(p_2,q_2)$ if after a local homeomorphism the singularity can be parametrized by
\[ x(t) = t^{p_1p_2}, \quad  y(t) = t^{q_1p_2} + t^{q_1p_2 + q_2}. \]
In this case we simply say that the singularity \emph{is of type} $(p_1,q_1)(p_2,q_2)$, where $\textrm{gcd}(p_1,q_1) = \textrm{gcd}(p_2,q_2) = 1, p_1, p_2 \geq 2$ and, by convention, $p_1 < q_1$. Sometimes we will allow $p_1 = 1$, in such case we say that the Newton pair `degenerates' to one Puiseux pair $(p_2, p_2q_1 + q_2)$.

For any locally irreducible plane curve singularity $f(x,y)=0$ with parametrization $x(t), y(t)$ as above, one can take the set of all possible local intersection multiplicities, \textit{i.e.} the set
\[ \Gamma = \{ \textrm{ord}_t F(x(t), y(t)) : f \nmid F \in \mathbb{C}\{x,y\} \} \subset \mathbb{Z}_{\geq 0}, \]
which is easily seen to be an additive semigroup. Moreover, it is a cofinite set, that is, $|\mathbb{Z}_{\geq 0} \setminus \Gamma| = \delta$ and $\textrm{max} \{\mathbb{Z}_{\geq 0} \setminus \Gamma\} = 2\delta - 1$, where $\delta$ is the so called \emph{delta invariant} of the singularity.

We define the \emph{semigroup counting function} $R$ by
\[ R(n) = \# \Gamma \cap [0, n], \]
\textit{i.e.} $R(n)$ is the number of semigroup elements less than or equal to $n$.

\subsection{Basic facts and further notation}\label{ss:basic}

The property checked in \cite{BLMN1} for all known rational unicuspidal curves and finally proved in \cite{BL1} is the following:

\begin{theorem}\cite[Theorem 1.1]{BL1}
Let $R$ be the semigroup counting function of the only singular point of a rational unicuspidal curve of degree $d$ in $\mathbb{C}P^2$. Then the following condition must hold:
\begin{equation}\labelpar{eq:sdp}
  R(jd) = (j+1)(j+2)/2
\end{equation}
for every $j = 0, 1, \dots, d-3$.
\end{theorem}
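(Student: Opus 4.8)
The plan is to convert the existence of the rational unicuspidal curve into a constraint on Heegaard Floer correction terms ($d$-invariants) and then read off the equalities from the fact that, for an algebraic knot, these correction terms are computed by the semigroup counting function $R$. First I would treat the local side. Let $K \subset S^3$ be the link of the unique cusp; since the singularity is locally irreducible, $K$ is a knot, in fact an algebraic (hence $L$-space) knot, and its knot Floer homology --- and therefore the $d$-invariants of every surgery $S^3_n(K)$ --- is completely determined by the semigroup $\Gamma$, i.e.\ by $R$. Concretely, the Ni--Wu / Ozsv\'ath--Szab\'o integer surgery formula gives
\[ d(S^3_n(K),i) = \frac{(2i-n)^2-n}{4n} - 2\,V_{\min(i,\,n-i)}, \]
where the non-negative, non-increasing integers $V_i$ (the local $h$-invariants) are expressible through the counting function $R$ via the torsion coefficients of the Alexander polynomial.

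Next I would extract a four-dimensional filling from the curve. Removing an open tubular neighbourhood $\nu C$ from $\mathbb{C}P^2$ produces a $4$-manifold $W=\mathbb{C}P^2\setminus\nu C$. Because $C$ is rational with a single cusp, a short Euler-characteristic and Mayer--Vietoris computation (using $\chi(C)=2$ and that the boundary is a rational homology sphere) shows that $W$ is a \emph{rational homology ball}, i.e.\ $b_1(W)=b_2(W)=b_3(W)=0$. Moreover, since $C\cdot C=d^2$ and the cusp contributes the knot $K$, the boundary is $\partial W = S^3_{d^2}(K)$, oriented compatibly with $\mathbb{C}P^2$.

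Now I would apply the correction-term obstruction to bounding a rational homology ball. For a $\mathrm{Spin}^c$ structure $\mathfrak s$ on $Y=\partial W$ that extends over $W$, the Ozsv\'ath--Szab\'o inequality for a negative-definite filling reads $c_1(\mathfrak s)^2 + b_2^-(W)\le 4\,d(Y,\mathfrak s|_Y)$; since $W$ is a rational homology ball this collapses to $0\le d(Y,\mathfrak s|_Y)$, and applying the same inequality to $-W$ (also a rational homology ball) forces the two-sided conclusion $d(Y,\mathfrak s|_Y)=0$ for \emph{every} extending $\mathfrak s$. A standard count shows that exactly $\sqrt{|H_1(Y)|}=\sqrt{d^2}=d$ of the $\mathrm{Spin}^c$ structures on $S^3_{d^2}(K)$ extend over $W$. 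Setting the surgery formula above equal to zero on these $d$ structures, and substituting the expression of $V_i$ (equivalently of the $d$-invariant) in terms of $R$, the vanishing condition simplifies to the desired $R(jd)=(j+1)(j+2)/2$; the extreme indices $j=0$ and $j=d-3$ give automatically true relations (the latter is just $R\big((d-3)d\big)=\delta=(d-1)(d-2)/2$), and the remaining values are paired by the conjugation symmetry of the $d$-invariants, which matches the symmetry of the semigroup. Crucially, because the rational homology ball supplies both bounds at once, one obtains equality directly rather than a one-sided estimate.

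The main obstacle is the bookkeeping in this last translation: pinning down the precise surgery description $\partial W=S^3_{d^2}(K)$, identifying exactly which $\mathrm{Spin}^c$ structures extend and how they are indexed by the residues $j$, and carrying out the elementary but delicate algebra that turns the vanishing $d(S^3_{d^2}(K),\cdot)=0$ into the clean binomial identity. In particular, it is the interplay of the obstruction from $W$ with that from its orientation reversal $-W$ --- forcing equality --- together with the correct semigroup-to-$d$-invariant dictionary, that forms the heart of the argument; the topological preparation (that $W$ is a rational homology ball with the stated boundary) and the combinatorial verification, though lengthy, are routine.
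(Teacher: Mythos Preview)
The paper does not give its own proof of this statement: the theorem is simply quoted verbatim from Borodzik--Livingston \cite{BL1} as background in \S\ref{ss:basic}, with no argument supplied. So there is nothing in the paper to compare your proposal against.

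That said, your sketch is a faithful outline of the actual Borodzik--Livingston proof. You correctly identify the key inputs: (a) the link $K$ of the cusp is an $L$-space knot whose $d$-invariants on large surgeries are governed by the semigroup via the torsion coefficients $V_i$; (b) $W=\mathbb{C}P^2\setminus\nu C$ is a rational homology $4$-ball with $\partial W\cong S^3_{d^2}(K)$ (up to orientation); (c) for each of the $d$ extending $\mathrm{Spin}^c$ structures the Ozsv\'ath--Szab\'o inequality, applied to both orientations, forces $d(\partial W,\mathfrak s)=0$; and (d) translating this back through the surgery formula yields the identities $R(jd)=(j+1)(j+2)/2$. The only places where care is genuinely needed are exactly the ones you flag: the orientation convention on $\partial W$, the precise enumeration of the extending $\mathrm{Spin}^c$ structures (they correspond to residues $i\equiv 0 \pmod d$ in the standard labelling of $\mathrm{Spin}^c(S^3_{d^2}(K))$), and the dictionary $V_i \leftrightarrow R$. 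None of these hides a real difficulty, so your proposal is sound as a proof sketch of the cited result.
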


In the rest of this subsection, we recall some results regarding the numerical invariants of plane curves. We rely mostly on \cite{BLMN2, BLMN1}.

In the case of a local singularity of type $(p_1, q_1)(p_2, q_2)$, the delta-invariant can be expressed as
\[ \delta = (p_1q_1p_2^2 + p_2q_2 - p_1p_2 - q_1p_2 - q_2 + 1)/2. \]
Also, the additive semigroup $\Gamma$ is generated (over $\mathbb{Z}_{\geq 0}$) by the following three elements $g_0, g_1, g_2$:
\begin{equation}\label{eq:semgen}
g_0 = p_1p_2, \quad g_1 = q_1p_2, \quad g_2 = p_1p_2q_1 + q_2.
\end{equation}

There are several invariants guiding the classification of projective plane curves. One can take the strict transform $\overline{C}$ under the \emph{local embedded minimal good resolution} $X \rightarrow \mathbb{C}P^2$ of the singularities  (only one singularity in our case) of $C$. (That is, we blow up $\mathbb{C}P^2$ several times until we resolve the singularities and obtain a normal crossing configuration: the exceptional divisors and the strict transform of the curve, which is smooth, intersect each other transversally and no three of them goes through the same point.) We denote by $\overline{C}^2$ the self-intersection of this strict transform $\overline{C}$ in $X$.

For a unicuspidal rational curve with two Newton pairs:
\begin{equation}
 \overline{C}^2 = d^2 - p_2q_2 -p_1q_1p_2^2.
\end{equation}

This, using the \emph{degree genus formula} for rational unicuspidal curves (that is, $(d-1)(d-2) = 2\delta$), turns into the following simpler expression:
\begin{equation}
 \overline{C}^2 = 3d - 1 - p_1p_2 - q_1p_2 - q_2.
\end{equation}

Denote by $\overline{\kappa}$ the \emph{logarithmic Kodaira dimension} of $\mathbb{C}P^2 \setminus C$ (see \cite{Iitaka, KM}). This turns out to be an extremely important invariant of a plane curve. Based on \cite[\S 1, (a), (b), (c)]{BLMN1}, recall the following. By the work \cite{Tsu} of Tsunoda, $\overline{\kappa} \neq 0$, therefore, the possibilities are $\overline{\kappa} \in \{ -\infty, 1, 2 \}$. Notice that in the case of unicuspidal curves, by the results of Yoshihara \cite{Yoshi}, $\overline{\kappa} = -\infty$ is equivalent with $\overline{C}^2 \geq -1$. Recall that all rational curves with $\overline{\kappa} = -\infty$ are classified by Kashiwara in \cite{Kashiwara}; see also \cite{MiSu}. Also, rational unicuspidal curves with $\overline{\kappa} = 1$ are classified by Tono in \cite{Tono1, Tono2}. This gives us a guiding principle where to look for the description of a unicuspidal curve with given numerical invariants.

Further facts and useful observations can be found in \cite{BLMN2, BLMN1}. 


\begin{remark}\label{rem:kappa}
 We wish to emphasize that since in \cite{BLMN1} the property \eqref{eq:sdp} was checked \emph{for all rational cuspidal curves with} $\overline{\kappa} \leq 1$, \emph{all the possible local types of cusps on such curves are explicitly listed in} \cite{BLMN1}. Moreover, the only known possible local types with two Newton pairs on a rational unicuspidal curve with $\overline{\kappa} = 2$ are those occuring on Orevkov's curves from \cite{Or} ((\ref{evii}) and (\ref{eviii}) in Theorem \ref{thm:main} below). For those, \eqref{eq:sdp} was also checked in \cite[Theorem 2 (c)]{BLMN1}. In particular, \emph{all the local types from Theorem \ref{thm:main} are already explicitly listed in \cite{BLMN1} with the appropriate references to their original constructions.}
\end{remark}

\section{The classification}

\subsection{List of possible types}

Based on the list in \cite[Theorem 1.1]{Liu}, we give the following complete list of singularity types. Set $F_0=0, F_1=1, F_{k+1}=F_k+F_{k-1}, k\geq 1$ for the Fibonacci numbers.

\begin{theorem}\label{thm:main}
 Let $C$ be a rational unicuspidal curve of degree $d$ whose singularity can be characterized by two Newton pairs. Then its type $(p_1,q_1)(p_2,q_2)$ is present in the list below. Conversely, for any member $(p_1,q_1)(p_2,q_2)$ of this list, there exists a rational unicuspidal curve $C$ with this type of singularity.
 
\begin{enumerate}[(i)]
  \item \label{ei} $(lF_{2k-1}^2+F_{2k-3}^2, lF_{2k+1}^2+F_{2k-1}^2+2)(F_{2k-1}^2, lF_{2k-1}^2+F_{2k-3}^2)$, 
        $d = F_{2k-1}F_{2k+1}(lF_{2k-1}^2+F_{2k-3}^2)$, $k \geq 2, l \geq 0$. ($k=2, l=0$ degenerates to one Puiseux type.)
  \item \label{eii} $(lF_{2k-1}^2+F_{2k-3}^2, lF_{2k+1}^2+F_{2k-1}^2+2)(F_{2k-1}, lF_{2k-1}+F_{2k-5})$, 
        $d = F_{2k+1}(lF_{2k-1}^2+F_{2k-3}^2)$, $k \geq 3, l \geq 0$.
  \item \label{eiii} $(n-1, n)(m, nm-1)$, $d = nm$, $n \geq 3, m \geq 2$.
  \item \label{eiv} $(n, 4n-1)(m, nm-1)$, $d = 2nm$, $n, m \geq 2$.
  \item \label{ev} $(n-1, n)(n, (n+1)^2)$, $d = n^2+1$, $n \geq 3$.
  \item \label{evi} $(n, 4n+1)(4n+1, (2n+1)^2)$, $d = 8n^2+4n+1$, $n \geq 2$.
  \item \label{evii} $(F_{4k}/3, F_{4k+4}/3)(3, 1)$, $d = F_{4k+2}$, $k \geq 2$.
  \item \label{eviii} $(F_{4k}/3, F_{4k+4}/3)(6, 1)$, $d = 2F_{4k+2}$, $k \geq 2$.
\end{enumerate}
\end{theorem}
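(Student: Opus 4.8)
The plan is to prove the two assertions separately: \emph{necessity}, that the type of any rational unicuspidal curve with two Newton pairs lies among (\ref{ei})--(\ref{eviii}), and \emph{sufficiency}, that each listed type is actually realized by a curve.

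For necessity I would start from \cite[Theorem 1.1]{BL1}: the semigroup counting function $R$ of the cusp must obey \eqref{eq:sdp}, that is $R(jd) = (j+1)(j+2)/2$ for all $0 \le j \le d-3$. Since for a type $(p_1,q_1)(p_2,q_2)$ the semigroup $\Gamma$ is generated by the three elements $g_0, g_1, g_2$ of \eqref{eq:semgen}, the quantity $R(n)$ is an explicit arithmetic function of $(p_1,q_1,p_2,q_2)$, and the degree $d$ is pinned down by the degree--genus relation $(d-1)(d-2) = 2\delta$ together with the stated formula for $\delta$. Substituting these expressions into \eqref{eq:sdp} converts the geometric hypothesis into a system of Diophantine identities, and the heart of the argument is to solve this system exhaustively. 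Following Liu \cite[Theorem 1.1]{Liu}, one shows that the solutions assemble into finitely many families, the Fibonacci patterns in (\ref{ei}), (\ref{eii}), (\ref{evii}), (\ref{eviii}) emerging from the continued-fraction structure forced by the equalities. This is the step I expect to be the principal obstacle: the case analysis is lengthy, and isolating \emph{exactly} these families rather than a larger superset requires carefully discarding the near-solutions that satisfy \eqref{eq:sdp} only for small $j$.

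For sufficiency I would exhibit an explicit curve realizing each family, organizing the constructions by the logarithmic Kodaira dimension $\overline{\kappa}$ of the complement. For each family the self-intersection $\overline{C}^2 = 3d - 1 - p_1p_2 - q_1p_2 - q_2$ is computed directly; by Yoshihara's criterion \cite{Yoshi} one has $\overline{\kappa} = -\infty$ exactly when $\overline{C}^2 \ge -1$, while Tsunoda \cite{Tsu} excludes $\overline{\kappa} = 0$, so each family is assigned a value $\overline{\kappa} \in \{ -\infty, 1, 2 \}$. The families with $\overline{\kappa} = -\infty$ are realized by Kashiwara's curves \cite{Kashiwara} (see also \cite{MiSu}); those with $\overline{\kappa} = 1$ by Tono's classification \cite{Tono1, Tono2}; the two families (\ref{evii}), (\ref{eviii}), which have $\overline{\kappa} = 2$, by Orevkov's curves \cite{Or}; and the remaining cases by the constructions of Fenske \cite{Fen}. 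In each instance the verification reduces to matching the local type of the known curve with the pair $(p_1,q_1)(p_2,q_2)$ and confirming the degree $d$, which is a routine computation once the correct construction has been located.

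Finally, to confirm that the list captures \emph{precisely} the realizable part of Liu's classification, I would address the two types on Liu's list that satisfy \eqref{eq:sdp} yet are omitted here: they must be shown non-realizable, which cannot be achieved using \eqref{eq:sdp} alone and requires a finer obstruction not detected by the semigroup condition. This closes the necessity direction, since a realizable type lies in Liu's list but cannot coincide with either excluded type, and therefore appears among (\ref{ei})--(\ref{eviii}).
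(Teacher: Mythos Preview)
Your overall architecture matches the paper's: invoke Borodzik--Livingston for \eqref{eq:sdp}, cite Liu's Diophantine classification, exclude the two spurious types, and then realize each family by locating it in the existing literature, organized via $\overline{\kappa}$.

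There is one genuine gap. You correctly observe that Liu's list contains two types beyond (\ref{ei})--(\ref{eviii}) which pass \eqref{eq:sdp} and must be ruled out by ``a finer obstruction not detected by the semigroup condition'', but you do not name that obstruction. The paper is specific: the two types are $(2,7)(4,17)$ with $d=17$ and $(2,3)(6,31)$ with $d=20$, and both are eliminated by the \emph{spectrum semicontinuity} criterion of \cite{BLMN2} (failing at $l=12$ and $l=13$ respectively, in the notation there). Without identifying this tool your necessity direction is incomplete, since no refinement of the semigroup analysis will exclude these cases.

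A smaller inaccuracy concerns the sufficiency bookkeeping. Your map ``value of $\overline{\kappa}$ $\mapsto$ source of construction'' is too coarse. Families (\ref{eiii}) and (\ref{eiv}) both have $\overline{C}^2 = m \ge 2$, hence $\overline{\kappa} = -\infty$, yet they are \emph{not} among Kashiwara's curves: (\ref{eiii}) is realized by Fenske \cite{Fen} and (\ref{eiv}) by Tono \cite{Tono1} (an Abhyankar--Moh type construction, so \cite{Tono1} is not a $\overline{\kappa}=1$ reference here). Kashiwara covers only (\ref{ei}) and (\ref{eii}), while the genuine $\overline{\kappa}=1$ families are (\ref{ev}) and (\ref{evi}), handled by \cite{Tono2}. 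So the realization step requires case-by-case matching rather than a single source per Kodaira dimension; once the attributions are corrected, the verifications are indeed routine as you say.
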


\begin{proof}
First we show that being in the above list is necessary for the realizability. Assume that $(p_1, q_1)(p_2, q_2)$ is the local cusp type of a unicuspidal rational curve of degree $d$. Then by \cite[Theorem 1.1]{BL1} \eqref{eq:sdp} must hold. In this case, by \cite[Theorem 1.1]{Liu}, the Newton pair either equals to $(2,7)(4,17)$ with $d=17$ or to $(2,3)(6,31)$ with $d=20$, or it is present in the above list. $(2,7)(4,17)$ is excluded in \cite[\S 6.10]{BLMN2} by the \emph{spectrum semicontinuity} (SS) criterion (SS fails at $l = 12$ with the notations therein). The other pair $(2,3)(6,31)$ can be excluded similarly, now SS fails at $l = 13$.

Now we show that any element of the above list is realizable.

In case (\ref{ei}) and (\ref{eii}) one computes that $\overline{C}^2 = 0$ and $-1$, respectively. Therefore, in both cases, $\overline{\kappa} = -\infty$. In \cite{BLMN1}, based on \cite{Kashiwara} (cf. also \cite{MiSu}), a list of numerical invariants was presented for these curves. One finds that curves from (\ref{ei}) and (\ref{eii}) are exactly those listed in \cite[\S 6.2.1, \S 6.2.2 with $N=1$]{BLMN1}, respectively. 

The existence of the case (\ref{eiii}), as also Liu notices, follows from \cite[Theorem 1.1, 1a]{Fen} ($\overline{C}^2 = m$, $\overline{\kappa} = -\infty$), cf. also \cite[\S 4, (9)]{BLMN1}.

The existence of (\ref{eiv}) is proved in \cite[Theorem 1.1, (iv)]{Tono1} (set $k_1=1, d_1=nm, k_2=n-1, d_2=m, k_3=m-1$ to match the two notations). These are of Abhyankar--Moh type (cf. again \cite[\S 4]{BLMN1}), that is, the tangent line to its singular point has no other intersection with the curve; as it is easily seen by comparing the semigroup generators \eqref{eq:semgen} with the degree of the curves and then using B\'ezout's theorem. We have $\overline{C}^2 = m$ and $\overline{\kappa} = -\infty$.

For the existence of (\ref{ev}) (with $\overline{C}^2 = -n+1$) and (\ref{evi}) (with $\overline{C}^2 = -n$), see \cite[Theorem 2, (i), Type I, $s=2$ and (ii), Type II]{Tono2} (cf. also \cite[\S 7.1]{BLMN1}). In these cases, $\overline{\kappa} = 1$.

The existence of (\ref{evii}) and (\ref{eviii}) follows from \cite[Theorem C, b), c)]{Or}, respectively (cf. \cite[\S 9.1, \S 9.2]{BLMN1}). In these cases, one has $\overline{C}^2 = -2$ and $\overline{\kappa} = 2$. (Notice that Orevkov uses a \emph{characteristic sequence} rather than the Newton pairs; for the definition and comparison, see \cite[\S 3]{Or}).

\end{proof}

\end{document}